\documentclass[12pt]{amsart}

\usepackage{amscd}
\usepackage{amsmath,graphicx}
\usepackage{verbatim}

\usepackage{amssymb}
\usepackage{pb-diagram,pb-xy}
\usepackage{ bbold }

\usepackage{amssymb}

\usepackage{mathtools}

\usepackage{xy}
\xyoption{all}

\numberwithin{equation}{section}

\DeclareMathAlphabet{\cat}{OT1}{cmss}{m}{sl}

\newtheorem{theorem}[equation]{Theorem}
\newtheorem{proposition}[equation]{Proposition}
\newtheorem{lemma}[equation]{Lemma}
\newtheorem{corollary}[equation]{Corollary}

\theoremstyle{definition}

\newcommand{\tens}{\otimes}
\newcommand{\inv}{^{-1}}

\newcommand{\Gal}{\operatorname{Gal}}

\newcommand{\GL}{\operatorname{GL}}

\newcommand{\xra}{\xrightarrow}

\newcommand{\F}{\mathbb{F}}

\newcommand{\Z}{\mathbb{Z}}

\newcommand{\Q}{\mathbb{Q}}

\renewcommand{\leq}{\leqslant}
\renewcommand{\geq}{\geqslant}

\newcommand{\nichego}[1]{}

\title
[Lifting property for finite groups] % colontitle
{Lifting property for finite groups}

\author{Chandrashekhar B. Khare}
\email{shekhar84112@gmail.com}

\author{Alexander Merkurjev}
\email{merkurev@math.ucla.edu}

\address{Department of Mathematics, University of California, Los Angeles, CA 90095-1555, USA}

\begin{document}

\begin{abstract}
We classify all finite groups that have lifting property of mod $p$ representations to mod $p^2$ representations
for all prime $p$.
\end{abstract}

\maketitle

\section{Introduction}

Let $R$ be a commutative local ring with maximal ideal $M$ satisfying $M^2=0$ and residue field $k$ of positive characteristic $p$.
For example, one can take $R=\Z/p^2\Z$ with $k=\F_p$, or more generally, for a field $k$ of characteristic $p>0$,
$R=W_2(k)$ the ring of the $p$-typical length $2$ Witt vectors of $k$.

The group
homomorphism $\GL_n(R)\to \GL_n(k)$ is surjective for every $n$ and its kernel $A_n(R)$ is an abelian group of exponent $p$.

Let $G$ be a   finite  group.
The following two statements are equivalent:

\medskip

1. Every    group homomorphism $G\to\GL_n(k)$  for any $n>0$ extends to a  group homomorphism $G\to\GL_n(R)$. 

\smallskip

2. Every continuous   $k[G]$-module $M$ of finite $k$-dimension  lifts to a  $R[G]$-module $N$, that is $N$ is free as
an $R$-module,  and $M\simeq N\tens_R k$ as $k[G]$-modules.

\medskip

If these two conditions hold for all $R$ (respectively for $R=\Z/p^2\Z$, where  $p$ is a prime integer) we say that $G$ is \emph{liftable}
(respectively, \emph{$p$-liftable}). The theorem below    shows it  to be a very restrictive property for finite groups.

\begin{theorem}\label{main}
Let $G$ be a finite group. The following are equivalent:

\noindent $(1)$\ $G$ is liftable.

\noindent $(2)$\  $G$ is $p$-liftable for every prime $p$.

\noindent $(3)$\  $G$ is isomorphic to one of the following groups: $C_{2^n}$, $C_3 \times C_{2^n}$ or $C_3 \rtimes  C_{2^n}$.
(The semidirect product is taken with respect the only nontrivial action of $ C_{2^n}$ on $C_3$.)
\end{theorem}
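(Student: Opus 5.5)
We will prove $(1)\Rightarrow(2)\Rightarrow(3)\Rightarrow(1)$. The first implication is trivial, since $\Z/p^2\Z$ is one of the rings $R$.

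\emph{Obstruction theory.} For a representation $\rho\colon G\to \GL_n(k)$, the pullback of the extension $1\to A_n(R)\to \GL_n(R)\to \GL_n(k)\to 1$ along $\rho$ is an extension of $G$ by the $G$-module $A_n(R)\simeq M_n(k)\tens_k M=\operatorname{End}(V)\tens_k M$. So $\rho$ lifts if and only if a class in $H^2(G,\operatorname{End}(V)\tens M)$ vanishes. Two consequences follow.
\begin{itemize}
\item[(a)] If a Sylow $p$-subgroup $P$ is such that the obstruction restricted to $P$ vanishes, then the obstruction itself vanishes, because restriction to $P$ is injective on $p$-torsion cohomology. Hence if every representation of $P$ lifts, so does every representation of $G$.
\item[(b)] Liftability passes to quotients, since $\rho$ may be inflated and a lift factors through the quotient after restriction. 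We will check whether it also passes to subgroups: induce $\rho$ from $H$ to $G$, lift the induced representation, and then restrict. Restriction contains $\rho$ as a direct summand, and lifts of direct summands exist because idempotents lift over $R$ in the endomorphism ring. So $p$-liftability is inherited by subgroups as well.
\end{itemize}

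\emph{$(3)\Rightarrow(1)$.} By (a), it suffices to check Sylow subgroups. For $p\ge 5$ the order is prime to $p$, so $H^2=0$. For $p=3$ the Sylow subgroup is $C_3$. For $p=2$ it is the cyclic group $C_{2^n}$. For a cyclic $p$-group $\langle g\rangle$ of order $q$, a representation is a matrix $X$ over $k$ with $X^q=1$, and we must lift it to $Y$ over $R$ with $Y^q=1$. Writing $X$ in Jordan form with eigenvalue $1$, each block $J_m$ with $m\le q$ is a quotient of $k[t]/(t^q-1)$. It lifts as the direct summand $R[t]/(t^q-1)$ restricted suitably. More cleanly, $k[C_q]$-modules are direct sums of $k[t]/(t-1)^m$, and we want a free $R$-module with an action of order $q$ reducing to it. The companion matrix of a lift of $(t-1)^m$ that divides $t^q-1$ in $R[t]$ does this. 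Such a lift exists for all $m\le q$ exactly when $q\le 2$ or ($p=2$ and the appropriate factorization lifts) — \textbf{this is the point to verify carefully.} For $C_3$ over $\Z/9$ and for $C_{2^n}$ over $\Z/4$ and general $R$ we must exhibit monic divisors. For $C_{2^n}$, the factorization $t^{2^n}-1=\prod(t^{2^i}+1)$ together with the factors $t\pm1$ should provide enough pieces combined with twisting. I expect this to be the main computational obstacle, and I would first try directly constructing lifts $Y=J_m+\text{(correction in } M\text{)}$ satisfying $Y^q=1$.

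\emph{$(2)\Rightarrow(3)$.} Using (b), we find minimal non-liftable groups and show that $G$ avoids them. Candidates:
\begin{itemize}
\item $C_p\times C_p$: one finds an explicit non-liftable $3$-dimensional representation.
\item $C_{p^2}$ for $p$ odd, and $C_p$ for $p\ge5$: show that some Jordan block size $m$ admits no lift with $Y^{q}=1$. This should follow because $(t-1)^m$ does not lift to a divisor of $t^q-1$ over $\Z/p^2$, in view of the cyclotomic factorization.
\item The quaternion group, $C_2\times C_2$, and the relevant small nonabelian groups.
\end{itemize}
Then every Sylow subgroup is cyclic, of order $2^n$ or $3$. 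Hence $G$ is metacyclic with order $2^n\cdot 3^{\le1}$; by Burnside's transfer argument the normal $2$-complement forces $G$ into one of the three listed forms, excluding $C_{2^n}\rtimes C_3$ since $C_3$ cannot act nontrivially on a cyclic $2$-group. The exclusion of $C_p$ for $p\ge5$ and of $C_9$ is the heart of the classification.
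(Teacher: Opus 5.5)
Your architecture (Sylow reduction, inheritance by subgroups, lifting $k[t]/(t-1)^m$ through a monic divisor of $t^q-1$, excluding minimal bad subgroups, then Burnside's normal $2$-complement) matches the paper's. However, the actual content of $(2)\Rightarrow(3)$ is missing.

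For the cyclic groups your reduction is sound once you note that, by Nakayama, any lift of $k[t]/(t-1)^m$ is cyclic. Hence it is $R[t]/(P)$ with $P$ monic of degree $m$, $P\equiv(t-1)^m$, and $P\mid t^q-1$. But ``in view of the cyclotomic factorization'' does not rule out such $P$: $(\Z/p^2\Z)[t]$ has no unique factorization, and Hensel's lemma fails at the repeated factor $t-1$. What is needed is the first-order computation. Write $P=(t-1)^m+pu$ and $t^q-1=PQ$ with $Q=(t-1)^{q-m}+pv$. Then $F:=\bigl(t^q-1-(t-1)^q\bigr)/p$ lies in $\bigl((t-1)^m,(t-1)^{q-m}\bigr)$ modulo $p$. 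This fails for $m=p^{n-1}+1\le q-m$, because the coefficient of $(t-1)^{m-1}$ in $F$ is $\binom{p^n}{p^{n-1}}/p\not\equiv 0\pmod p$. This is the paper's $\theta$-obstruction, Corollary~\ref{cycle}.

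For the non-cyclic groups you give no representations. For $C_3\times C_3$ a $3$-dimensional one does exist ($\sigma_{12},\sigma_{13}$ in $\GL_3(\F_3)$). For $C_2\times C_2$ it cannot: every $3$-dimensional $\F_2[C_2\times C_2]$-module is either a sum of modules inflated from $C_2$-quotients or $\Omega^{\pm1}(\F_2)$ (the augmentation ideal, or $\F_2[G]/\F_2\cdot\sum_{g\in G}g$), and all of these lift. The paper needs a $4$-dimensional representation and a trace argument modulo $[x,M_2(\F_2)]$.

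You also do not treat $Q_8$, and it is unavoidable. A $2$-group with no $C_2\times C_2$ is cyclic or generalized quaternion (a fact you need and do not state). The paper excludes $Q_8$ with a $6$-dimensional representation and a similar trace computation.

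Two further steps need repair. In (b), idempotent lifting does not apply: for a lift $\widehat Z$ of $X\oplus Y$, the map $\operatorname{End}_{R[H]}(\widehat Z)\to\operatorname{End}_{k[H]}(X\oplus Y)$ need not be surjective. For example, $C_2$ acting on $(\Z/4\Z)^2$ by $\bigl(\begin{smallmatrix}1&2\\0&1\end{smallmatrix}\bigr)$ lifts $\F_2\oplus\F_2$. Its endomorphisms reduce only to matrices $\bigl(\begin{smallmatrix}a&b\\0&a\end{smallmatrix}\bigr)$, so no rank-one projection lifts. The conclusion is nevertheless true by Lemma~\ref{sum}. In an adapted basis the lift takes values in block matrices whose off-diagonal blocks are divisible by $p$, and passing to the diagonal blocks is a homomorphism. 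Your claim about quotients is also unjustified (a lift of an inflated representation need not kill the kernel), but it is never used.

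In $(3)\Rightarrow(1)$, your criterion ``exactly when $q\le 2$ or $p=2$'' wrongly excludes $C_3$, where $t^2+t+1\equiv(t-1)^2\pmod 3$ divides $t^3-1$. For $C_{2^n}$ no twisting is needed, since $t^{2^j}+1\equiv(t-1)^{2^j}\pmod 2$. For $m=\sum_i 2^{s_i}<2^n$ written in binary, the product $\prod_i(t^{2^{s_i}}+1)$ is a monic divisor of $t^{2^n}-1=(t-1)\prod_{j<n}(t^{2^j}+1)$ lifting $(t-1)^m$. The case $m=2^n$ is the free module.
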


We say a few words about  a context for  our result, and earlier related work. The modularity  conjecture of Serre \cite{Serre}, proved in \cite{KW},  asserts  that odd, irreducible, 2-dimensional  mod $p$  representations $\rho$  of  $\Gal(\overline{\Q}/\Q)$,  \[\rho:\Gal(\overline{\Q}/\Q) \to \GL_2(k),\] with $k$ a finite field,  arise from newforms, and in particular lift to characteristic 0. This has led to a lot of work about lifting  mod $p$ representations of absolute Galois groups of local and global fields.  For absolute Galois groups of  fields,  lifting questions have been studied in \cite{MS26}.  The lifting property for   finite groups was first studied in \cite{KL}: this  is almost complementary to the case  of absolute Galois groups as,   by  a   theorem of Artin and Schreier, the only finite groups that are absolute Galois groups are 1 and $C_2$.

More precisely, in  \cite{KL},  $G$ is  said to be \emph{liftable} (respectively, \emph{$p$-liftable}), if any homomorphism $f:G \rightarrow \GL_n(k)$   with $k$ a finite field (respectively, a finite extension of $\F_p$) lifts to $\GL_n(W_2(k))$.   The definition of liftable in \cite{KL}  is {\it a priori}  weaker than our definition, and of $p$-liftable in \cite{KL}  is {\it a priori}  stronger than our definition. Under  either  of   the definitions,  it is easy to see that a  finite group $G$  is $p$-liftable if and only if a  Sylow $p$-subgroup of $G$  is  $p$-liftable. From this and the equivalence of (1) and (2) of the above theorem, it follows  that the definitions of liftable and $p$-liftable here and in loc. cit. are equivalent. Thus the theorem above sharpens Propositions 1.3 and 1.4 of \cite{KL},  which left open  whether the quaternion group  $Q_8$ is liftable: we show in  Proposition  \ref{q8}  below that $Q_8$ is not 2-liftable.

\section{Preliminaries}

Let $R$ be a commutative local ring with maximal ideal $M$ satisfying $M^2=0$ and residue field $k$ of characteristic $p>0$.
We have an exact sequence
\begin{equation}\label{modp2}
1\to A_n(R)\to \GL_n(R) \to \GL_n(k)\to 1,
\end{equation}
where $A_n(R)$ is an abelian group of exponent $p$ with a natural structure of a $\GL_n(k)$-module.

If $R=\Z/p^2\Z$ for a prime $p$, the group $A_n(R)$ is identified with the (additive) group $M_n(\F_p)$ of all $n\times n$
matrices over $\F_p$ via $1+px\mapsto \bar x$, the residue of $x$ modulo $p$. The group $\GL_n(\F_p)$ acts on $M_n(\F_p)$ by conjugation.

\begin{lemma}\label{sum}
Let $H$ be a finite group and $X$ and $Y$ two finite dimensional $k[H]$-modules. Then $X\oplus Y$ is $p$-liftable
if and only if $X$ and $Y$ are $p$-liftable.
\end{lemma}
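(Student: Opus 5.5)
The plan is to work directly with matrices, using the exact sequence \eqref{modp2} and the hypothesis $M^2=0$. Throughout, ``liftable'' for a $k[H]$-module means that it admits a lift to an $R[H]$-module that is free over $R$, as in condition 2 of the introduction.

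\emph{The ``if'' direction.} Suppose $N_X$ and $N_Y$ are lifts of $X$ and $Y$. Then $N_X\oplus N_Y$ is a free $R$-module with an $R[H]$-action, and
\[
(N_X\oplus N_Y)\tens_R k\simeq X\oplus Y
\]
as $k[H]$-modules. So $X\oplus Y$ is liftable.

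\emph{The ``only if'' direction.} This is the only direction with content. Let $n=\dim X+\dim Y$, and let $\rho:H\to\GL_n(k)$ be the homomorphism given by $X\oplus Y$ in a basis adapted to the decomposition. Then $\rho(h)=\mathrm{diag}(\rho_X(h),\rho_Y(h))$ is block diagonal. Let $N$ be a lift of $X\oplus Y$. Lifting this adapted $k$-basis of $N\tens_R k$ to an $R$-basis of $N$ (possible since $N$ is free over the local ring $R$) gives a homomorphism $\tilde\rho:H\to\GL_n(R)$ lifting $\rho$. Write it in block form
\[
\tilde\rho(h)=\begin{pmatrix} a(h) & b(h)\\ c(h) & d(h)\end{pmatrix}.
\]
Since $\tilde\rho$ reduces to $\rho$, the entries of $b(h)$ and $c(h)$ lie in $M$. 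Moreover $a(h)$ reduces to $\rho_X(h)$, which is invertible. A square matrix over the local ring $R$ whose reduction is invertible is itself invertible, so $a(h)\in\GL_{\dim X}(R)$.

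\emph{The key step.} Compare the upper-left blocks of $\tilde\rho(gh)=\tilde\rho(g)\tilde\rho(h)$:
\[
a(gh)=a(g)a(h)+b(g)c(h).
\]
The product $b(g)c(h)$ has entries in $M^2=0$, so $a(gh)=a(g)a(h)$. Hence $a:H\to\GL_{\dim X}(R)$ is a homomorphism lifting $\rho_X$, and $X$ is liftable. The same argument with the lower-right block $d$ shows that $Y$ is liftable.

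This is the only place where anything is used beyond bookkeeping, and the hypothesis $M^2=0$ makes it immediate. I therefore do not expect a real obstacle. The one point to state carefully is the choice of an adapted $R$-basis of $N$, which ensures that the reduction of $\tilde\rho$ is exactly block diagonal rather than merely conjugate to a block-diagonal matrix.
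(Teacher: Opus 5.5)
Your proof is correct and is essentially the paper's argument. The paper phrases your key step as the fact that, on the preimage $E$ of the block-diagonal subgroup, the map $\delta$ sending $\begin{pmatrix} A & pB \\ pC & D\end{pmatrix}$ to $(A,D)$ is a homomorphism, which holds for exactly your reason ($p^2=0$); you additionally write out the trivial ``if'' direction and work over any $R$ with $M^2=0$ rather than only $\Z/p^2\Z$.
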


\begin{proof}
Let $n=\dim(X)$, $m=\dim(Y)$, $R=\Z/p^2\Z$ and $k=\F_p$. Consider the diagram
\[
\xymatrix{
0  \ar[r] & M_{n+m}(k)  \ar@{=}[d]  \ar[r] &  \GL_{n+m}(R)  \ar[r]^\gamma & \GL_{n+m}(k)  \ar[r] & 1 \\
0  \ar[r] & M_{n+m}(k)  \ar[d]_\beta  \ar[r] &  E  \ar@{^{(}->}[u] \ar[d]_\delta \ar[r] & \GL_{n}(k)\times \GL_{m}(k)  \ar@{^{(}->}[u]^\alpha \ar@{=}[d] \ar[r] & 1 \\
0  \ar[r] & M_{n}(k)\oplus M_{m}(k) \ar[r]    &  \GL_{n}(R)\times \GL_{m}(R) \ar[r] &  \GL_{n}(k)\times \GL_{m}(k) \ar[r] & 1,
}
\]
where
\[
\alpha(S,T)=\begin{pmatrix}
    S & 0 \\
    0 & T \\
\end{pmatrix},\quad \beta\begin{pmatrix}
    A & B \\
    C & D \\
\end{pmatrix}=(A,D),
\]
$E$ is the inverse image under $\gamma$ of the image of $\alpha$. It consists of all invertible matrices of the form
$\begin{pmatrix}
    A & pB \\
    pC & D \\
\end{pmatrix}$ such that $A,B,C,D$ are matrices over $R$. The map $\delta$ is a homomorphism taking
$\begin{pmatrix}
    A & pB \\
    pC & D \\
\end{pmatrix}$ to $(A,D)$.

Let $\varepsilon: H\to \GL_{n}(k)\times \GL_{m}(k)$ be a homomorphism afforded by $X$ and $Y$. If  $X\oplus Y$ is $p$-liftable,
then there is a homomorphism $\eta: H\to \GL_{n+m}(R)$ such that $\gamma\eta=\alpha\varepsilon$. It follows that
the image of $\eta$ is contained in $E$. Composing $\eta$ with $\delta$, we get a lifting $H\to \GL_{n}(R)\times \GL_{m}(R)$
of  $\varepsilon$, hence $X$ and $Y$ are $p$-liftable.
\end{proof}

\begin{proposition}\label{red}\cite[Lemma 2.3]{KL}
Let $G$ be a finite group and $H\subset G$ a subgroup. If $G$ is $p$-liftable, then so is $H$.
\end{proposition}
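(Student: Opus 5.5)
The plan is to use induced modules together with Lemma \ref{sum}. Let $N$ be a finite-dimensional $k[H]$-module with $k=\F_p$, and $R=\Z/p^2\Z$. Form the induced module $\operatorname{Ind}_H^G N = k[G]\tens_{k[H]} N$. Since $G$ is $p$-liftable, this module lifts to an $R[G]$-module $L$ that is free over $R$, with $L\tens_R k\simeq \operatorname{Ind}_H^G N$. Restricting to $H$, $\operatorname{Res}_H L$ is an $R[H]$-lift of $\operatorname{Res}_H\operatorname{Ind}_H^G N$. Hence the $k[H]$-module $\operatorname{Res}_H\operatorname{Ind}_H^G N$ is $p$-liftable.

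Next I invoke the Mackey decomposition. The double coset $H\cdot 1\cdot H$ contributes the summand $N$ itself, so $N$ is a direct summand of $\operatorname{Res}_H\operatorname{Ind}_H^G N$. Concretely,
\[
\operatorname{Res}_H\operatorname{Ind}_H^G N\simeq N\oplus N',
\]
where $N'=\bigoplus_{g\in H\backslash G/H,\, g\notin H} k[H]\tens_{k[H\cap gHg^{-1}]} {}^gN$. Lemma \ref{sum} then says that if $N\oplus N'$ is $p$-liftable, so is $N$. This finishes the $p$-liftable case, since every homomorphism $H\to \GL_n(\F_p)$ corresponds to such an $N$.

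The main point to check is that Lemma \ref{sum} applies to modules rather than to the homomorphism picture. Choosing a basis adapted to $N\oplus N'$ turns the representation into a block-diagonal homomorphism, which is exactly the setting of that proof. No further subtlety arises, because Mackey's decomposition is canonical enough to give an internal direct sum. The only step needing care is confirming that the coset $H\cdot 1\cdot H$ really gives a copy of $N$ as a direct summand. For this, note that $1\tens N\subset \operatorname{Ind}_H^G N$ is $H$-stable, and that the span of $g\tens N$ for $g\notin H$ is an $H$-stable complement.

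If one wanted the statement for a general $R$ (liftability), the same argument works verbatim, provided the analogue of Lemma \ref{sum} holds for arbitrary $R$. The proof of that lemma uses nothing beyond block matrices, so it does.
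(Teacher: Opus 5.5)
Your proposal is correct and follows the paper's proof: you induce $N$ to $G$, lift using $p$-liftability of $G$, restrict the lift to $H$, note that $N$ is an $H$-direct summand of $\operatorname{Res}_H\operatorname{Ind}_H^G N$, and conclude by Lemma \ref{sum}. The explicit $H$-stable complement spanned by the $g\tens N$ with $gH\neq H$ is all you need for the direct-summand step, so invoking the full Mackey decomposition is unnecessary.
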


\begin{proof}

Let $X$ be an $k[H]$-module. Consider the induced $k[G]$-module $Z:=k[G]\otimes_H X$. As a $k[H]$-module, $Z$ contains
$X$ as a direct summand: $Z=X\oplus Y$ for a $k[H]$-module $Y$. Since $Z$ is $p$-liftable as a $k[G]$-module, it is $p$-liftable
as a $k[H]$-module. By Lemma \ref{sum}, the $k[H]$-module $X$ is $p$-liftable, hence $H$ is $p$-liftable.
\end{proof}

%\begin{equation}\label{modp3}
%0\to M_n(\F_p)\to \GL_n(\Z/p^2\Z) \to \GL_n(\F_p)\to 1.
%\end{equation}

\begin{proposition}\label{sylow}\cite[Lemma 2.2]{KL}
If every Sylow subgroup of a finite group $G$ is liftable, then $G$ is liftable.
%A finite group $G$ is $p$-liftable if and only if a Sylow $p$-subgroup
%of $G$ is $p$-liftable.
\end{proposition}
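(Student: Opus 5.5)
Fix $R$ with $M^2=0$ and residue field $k$ of characteristic $p$, and a homomorphism $\rho: G\to \GL_n(k)$. Pulling back the extension (\ref{modp2}) along $\rho$ gives an extension
\[
1\to A_n(R)\to E_\rho \to G\to 1,
\]
where $A_n(R)$ is an abelian group of exponent $p$, viewed as a $G$-module via $\rho$. A lift of $\rho$ to $\GL_n(R)$ is the same thing as a splitting of this extension. So the obstruction to lifting is a class $c_\rho\in H^2(G,A_n(R))$, and $\rho$ lifts if and only if $c_\rho=0$. The plan is to reduce the vanishing of $c_\rho$ to the Sylow $p$-subgroup by the standard restriction--corestriction argument.

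Let $P$ be a Sylow $p$-subgroup of $G$. For any $G$-module $A$, the composition
\[
\operatorname{cor}\circ\operatorname{res}: H^2(G,A)\to H^2(P,A)\to H^2(G,A)
\]
is multiplication by the index $[G:P]$. Since $A=A_n(R)$ has exponent $p$ and $[G:P]$ is prime to $p$, multiplication by $[G:P]$ is an automorphism of $H^2(G,A)$. Hence restriction $H^2(G,A)\to H^2(P,A)$ is injective.

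The obstruction class is natural with respect to restriction to subgroups: $\operatorname{res}(c_\rho)=c_{\rho|_P}$, because the pullback of the extension $E_\rho$ to $P$ is exactly the extension attached to $\rho|_P$. By hypothesis the Sylow subgroup $P$ is liftable, so $\rho|_P$ lifts to $\GL_n(R)$. Therefore $c_{\rho|_P}=0$, injectivity gives $c_\rho=0$, and $\rho$ lifts.

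The argument uses only the Sylow $p$-subgroup for the prime $p=\operatorname{char}k$. Since every $R$ has some residue characteristic, applying it to each $R$ gives liftability of $G$. The same argument also shows that $G$ is $p$-liftable if $P$ is $p$-liftable.

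The only point needing care is the identification of lifts with splittings and of splittings with the vanishing of $c_\rho$. This requires $A_n(R)$ to be abelian, which holds because $M^2=0$: $(1+x)(1+y)=1+x+y$ for $x,y\in M_n(M)$. One must also check that the $G$-action on $A_n(R)$ induced by conjugation in $E_\rho$ is the one factoring through $\rho$. Both are routine, so I expect no serious obstacle.
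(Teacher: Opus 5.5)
Your proposal is correct and follows essentially the same route as the paper: pull back the extension (\ref{modp2}) along $\rho$, note that $\rho|_{P}$ lifts so the restricted class vanishes, and conclude via injectivity of $H^2(G,A_n(R))\to H^2(P,A_n(R))$ for $P$ the Sylow $p$-subgroup. The only differences are cosmetic. You pull back the extension directly to $G$ rather than pulling back the class from $H^2(\GL_n(k),A_n(R))$, and you spell out the $\operatorname{cor}\circ\operatorname{res}=[G:P]$ justification for injectivity that the paper leaves implicit.
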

\begin{proof}

%If $G$ is $p$-liftable, then a Sylow $p$-subgroup $G_p$ is $p$-liftable by Proposition \ref{red}.
%Conversely,
%suppose that $G_p$ is $p$-liftable and
Let $R$ be a commutative local ring with residue field $k$ of characteristic $p>0$ as above.
Let $f:G\to \GL_n(k)$ be a group homomorphism. The restriction $h$ of $f$
to $G_p$ lifts to a homomorphism $G_p\to \GL_n(R)$, that is the pullback of the sequence (\ref{modp2})
with respect to $h$ is splits. Equivalently, the image of the class of (\ref{modp2}) under the pullback map
$h^*:H^2(\GL_n(k), A_n(R))\to H^2(G_p, A_n(R))$ is zero.
Since $A_n(R)$ is $p$-torsion and $[G:G_p]$ is prime to $p$,
the restriction homomorphism $H^2(G, A_n(R))\to H^2(G_p, A_n(R))$ is injective. Hence
the image of the class of (\ref{modp2}) under the pullback map
$f^*:H^2(\GL_n(k), A_n(R))\to H^2(G, A_n(R))$ is also zero, that is $f$ lifts to a homomorphism
$G\to \GL_n(R)$.
\end{proof}

\section{Cyclic groups}

Set   $R=\Z/p^2\Z$ and $k=\F_p$ for a prime $p$. Let $G$ be a finite group and let
$f$ and $h$ be two elements in $k[G]$ such that $fh=0$. Choose their  lifts
$\hat f$ and $\hat h$ in $R[G]$. Then $\hat f\hat h=pu$ for some $u\in R[G]$
that is unique modulo $pR[G]$. If $\hat f+pv$ and $\hat h+pw$ are two other lifts, then
\[
(\hat f+pv)(\hat h+pw)=p(u+\hat fw+v\hat h).
\]
Thus, the residue of $u$ is uniquely determined in the quotient group $k[G]/(f k[G]+k[G] h)$.
We denote this class by $\theta(f,h)$.

\begin{proposition}\label{more}
Let $G$ be a $p$-liftable group. Then for every pair $f$ and $h$ of elements in $k[G]$ such that $fh=0$
we have $\theta(f,h)=0$.
\end{proposition}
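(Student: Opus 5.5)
The plan is to translate $p$-liftability of a single cyclic $k[G]$-module into an $R$-free left ideal of $R[G]$ lifting $k[G]f$. We then read off $\theta(f,h)$ from a $k[G]$-module homomorphism, and kill it using self-injectivity of $k[G]$.

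\emph{Step 1: a lifted ideal.} Put $M=k[G]/k[G]f$, a finite-dimensional left $k[G]$-module. Since $G$ is $p$-liftable, $M\simeq N\tens_R k$ for some $R[G]$-module $N$ that is free over $R$. Lift the generator $\bar 1\in M$ to $e\in N$. By Nakayama, $e$ generates $N$ as an $R[G]$-module. Hence there is a surjection $\pi:R[G]\to N$, $x\mapsto xe$, whose reduction mod $p$ is the projection $k[G]\to M$. Let $K=\ker\pi$, a left ideal of $R[G]$. Since $N$ is $R$-free, $0\to K\to R[G]\to N\to 0$ splits over $R$. Therefore $K$ is $R$-free, $K\cap pR[G]=pK$, and the reduction $K/pK\to k[G]$ is injective with image $\ker(k[G]\to M)=k[G]f$. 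In particular we may choose the lift $\hat f$ of $f$ inside $K$. This is allowed because $\theta(f,h)$ does not depend on the choice of lifts.

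\emph{Step 2: the homomorphism $\psi$.} Fix any lift $\hat h$ of $h$. For $x\in K$, its reduction lies in $k[G]f$, and $k[G]f\cdot h=0$, so $x\hat h\in pR[G]$. Define
\[
\psi:K/pK=k[G]f\longrightarrow k[G],\qquad \bar x\mapsto \overline{(x\hat h)/p},
\]
the residue of any $y\in R[G]$ with $x\hat h=py$. The residue $\bar y$ is well defined, since $y$ is unique modulo $pR[G]$. The map $\psi$ is also well defined on $K/pK$: if $x=pz$ with $z\in K$, then $x\hat h=p(z\hat h)$ with $z\hat h\in pR[G]$, so $\psi(\bar x)=0$. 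Clearly $\psi$ is a homomorphism of left $k[G]$-modules. By construction $\psi(f)$ is the residue of $u$, where $\hat f\hat h=pu$.

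\emph{Step 3: conclusion.} The group algebra $k[G]$ is a Frobenius algebra, hence self-injective as a left module over itself. So $\psi$ extends to a left $k[G]$-module endomorphism of $k[G]$. Any such endomorphism is right multiplication by some $w\in k[G]$. Hence $\bar u=\psi(f)=fw\in fk[G]$, and thus $\theta(f,h)=0$ in $k[G]/(fk[G]+k[G]h)$.

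The main obstacle is Step 1: checking that the $R$-freeness of the lift $N$ really forces $K\cap pR[G]=pK$, so that $K$ reduces isomorphically onto $k[G]f$. This is exactly where the liftability hypothesis enters; the remaining steps are formal once it is in place. The one external input is the self-injectivity of $k[G]$, which is standard for group algebras of finite groups.
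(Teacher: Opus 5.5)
Your proof is correct, but it takes a different route from the paper's.

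\textbf{The paper's route.} The paper lifts the module $k[G]/k[G]h$ rather than $k[G]/k[G]f$. This gives an $R$-free left ideal $\widehat I$ lifting $k[G]h$, and by Nakayama $\widehat I=R[G]\hat h$. Right multiplication $r(\hat h)\colon R[G]\to \widehat I$ is then a surjection onto an $R$-free module. Its kernel $\widehat J$ therefore lifts the left annihilator $J=\{x : xh=0\}$, which contains $f$. Choosing $\hat f\in\widehat J$ gives $\hat f\hat h=0$ on the nose.

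\textbf{Your route.} Lifting $k[G]/k[G]f$ puts the lifted ideal on the wrong side. It pins down $\hat f$, but says nothing about the right annihilator of $f$, which is where $h$ lives. So you have to package the leftover obstruction as the $k[G]$-linear map $\psi\colon k[G]f\to k[G]$. You then need self-injectivity of $k[G]$ to force $\psi$ to be a right multiplication, which puts $\bar u=\psi(f)$ in $fk[G]$.

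Your steps all check out. The identity $K\cap pR[G]=pK$ is exactly what makes $\psi$ well defined on $k[G]f$. The residue of $y$ is well defined because $\operatorname{Ann}_{R[G]}(p)=pR[G]$, which holds for $R=\Z/p^2\Z$.

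\textbf{Trade-off.} The paper's argument uses the $R$-freeness of the single lift twice, via two splitting arguments, and needs no outside input. It works verbatim with any $R$-free algebra $\Lambda$ in place of $R[G]$. Your argument uses the splitting once and isolates $\theta(f,h)$ as the obstruction to extending $\psi$. The price is the Frobenius (self-injectivity) property of $k[G]$, which a general $\Lambda/p\Lambda$ lacks.

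\textbf{A way to avoid self-injectivity.} Lift the right module $k[G]/fk[G]$ instead; this is legitimate since right and left $k[G]$-modules correspond via $g\mapsto g^{-1}$. This gives a right ideal $\hat f R[G]$. The kernel of left multiplication by $\hat f$ then lifts the right annihilator of $f$, so you can choose $\hat h$ with $\hat f\hat h=0$. This is the mirror image of the paper's proof.
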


\begin{proof}
By assumption, the left $k[G]$-module $X:=k[G]/I$, where $I=k[G]h$, admits an
$R[G]$-lifting $\widehat X$.
The surjective composition $R[G]\to k[G]\to X$ lifts to a homomorphism of left $R[G]$-modules
$R[G]\to \widehat X$ that is surjective by Nakayama. Let $\widehat I$ be its kernel.

We claim that the left ideal $\widehat I\subset R[G]$ is a lifting of $I$.
Indeed, since $\widehat X$ is free as an $R$-module, the exact sequence
$0\to \widehat I\to R[G]\to \widehat X\to 0$ is split as a sequence of $R$-modules.
It follows that $\widehat I$ is free as an $R$-module and $I\simeq \widehat I/p\widehat I$.
This proves the claim.

Let $\hat h\in \widehat I$
be a lift of $h$. By Nakayama, $\hat h$ generates $\widehat I$, that is  $\widehat I=R[G]\hat h$.

Now the map $k[G]\xra{r(h)} I$ of right multiplication by $h$
has a lifting $R[G]\xra{r(\hat h)} \widehat I$. Therefore, the kernel
$\widehat J$ of $r(\hat h)$ is a lifting of the kernel $J$ of $r(h)$. In particular, the map $\widehat J\to J$
is surjective and we can choose a lift $\hat f\in \widehat J$ of $f$. Since $\hat f\hat h=0$, we have $\theta(f,h)=0$.
\end{proof}

\begin{corollary}\label{cycle}\cite[Proposition 2.8]{KL}
Let $p>2$ be a prime and $n$ a positive integer such that $n\geq 2$ if $p=3$. Then a cyclic group of order $p^n$
is not $p$-liftable.
\end{corollary}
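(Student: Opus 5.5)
The plan is to apply Proposition \ref{more} contrapositively: I will exhibit $f,h\in k[G]$ with $fh=0$ but $\theta(f,h)\neq 0$.

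Let $G=\langle g\rangle$ have order $p^n$ and put $t=g-1$. Since $g^{p^n}=1$ and we are in characteristic $p$, we get $k[G]=\F_p[t]/(t^{p^n})$. I take
\[
f=t^a,\qquad h=t^b,\qquad a+b=p^n,\qquad a,b\geq p^{n-1}+1 .
\]
Such $a,b$ exist exactly when $p^n\geq 2p^{n-1}+2$. This holds for all $n\geq1$ when $p\geq5$, and for $n\geq 2$ when $p=3$. It fails for $p=3$, $n=1$, which matches the excluded case. Clearly $fh=t^{p^n}=0$. The target group is
\[
k[G]/(fk[G]+k[G]h)=\F_p[t]/(t^{m}),\qquad m=\min(a,b)\geq p^{n-1}+1 ,
\]
since $k[G]$ is commutative.

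Next I compute $\theta$ using the lifts $\hat f=\hat t^{\,a}$ and $\hat h=\hat t^{\,b}$ in $R[G]$, where $\hat t=g-1$. In $R[G]$ we have $g^{p^n}=1$, that is, $(1+\hat t)^{p^n}-1=0$. Hence
\[
\hat f\hat h=\hat t^{\,p^n}=\hat t^{\,p^n}-\bigl((1+\hat t)^{p^n}-1\bigr)=-\sum_{0<i<p^n}\binom{p^n}{i}\hat t^{\,i}.
\]
Every $\binom{p^n}{i}$ with $0<i<p^n$ is divisible by $p$, so this equals $pu$ with
\[
u=-\sum_{0<i<p^n}\tfrac1p\binom{p^n}{i}\hat t^{\,i}.
\]

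The key step is reading off the residue of $u$ modulo $t^m$ from $p$-adic valuations. By Kummer's theorem, $v_p\binom{p^n}{i}=n-v_p(i)$. Therefore, for $0<i<p^{n-1}$ the valuation is at least $2$, and those coefficients of $u$ vanish mod $p$. For $i=p^{n-1}$ the valuation is exactly $1$, so the coefficient of $t^{p^{n-1}}$ in the residue of $u$ is a nonzero element of $\F_p$. Thus $u\equiv c\,t^{p^{n-1}}+(\text{higher terms})$ with $c\neq0$. Since $p^{n-1}<m$, this class is nonzero in $\F_p[t]/(t^m)$, and hence $\theta(f,h)\neq0$. Proposition \ref{more} then shows that $G$ is not $p$-liftable.

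The main obstacle is this valuation bookkeeping. One has to make sure that the lowest surviving term of $u$ sits in degree strictly below $m$, while all lower-degree coefficients die mod $p$. This is also where the hypothesis on $p$ and $n$ enters, through the existence of suitable $a,b$.
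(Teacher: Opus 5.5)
Your proposal is correct and is essentially the paper's proof. The paper takes $f=(1-\sigma)^m$ and $h=(1-\sigma)^{p^n-m}$ with $m=p^{n-1}+1$ (your case $a=m$), identifies $\theta(f,h)$ with the class of $\bigl[(1-s)^{p^n}-1+s^{p^n}\bigr]/p$ in $k[s]/(s^m)$, and notes that its $s^{p^{n-1}}$-coefficient $\pm\binom{p^n}{p^{n-1}}/p$ is prime to $p$; your extra Kummer argument that the lower-degree coefficients vanish is harmless but unnecessary, since one nonzero coefficient in degree $<m$ already makes the class nonzero.
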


\begin{proof}
Let $\sigma$ be a generator of a cyclic group $G$ order $p^n$. Let $m:=p^{n-1}+1$. Consider the elements $f=(1-\sigma)^m$
and $h=(1-\sigma)^{p^n-m}$ in $k[G]$. Since $(1-\sigma)^{p^n}=0$, we have $fh=0$. We will prove that  $\theta(f,h)\neq 0$
and hence $G$ is not $p$-liftable.

Write $k[G]$ as the quotient of the polynomial ring $k[t]$ by the ideal generated by $(1-t)^{p^n}$. Since $m\leq p^n-m$ by
assumption, the class $\theta(f,h)$ is represented by $Q(\sigma)$ in
\[
k[G]/(1-\sigma)^m=k[t]/(1-t)^m,
\]
where $Q$ is the integer polynomial $[(1-t)^{p^n} -(1-t^{p^n})]/{p}$.
Setting $s:=1-t$ we have $Q=[(1-s)^{p^n} -(1-s^{p^n})]/{p}$.
The $s^{m-1}$-coefficient of $Q$ is equal to $\binom{p^n}{p^{n-1}}/p$ and hence is not divisible by $p$.
It follows that the class of $Q(\sigma)$ in $k[G]/(1-\sigma)^m=k[s]/(s^m)$ is nontrivial, hence $\theta(f,h)\neq 0$.
\end{proof}

\begin{proposition}\label{cyclic}
Let $G$ be a cyclic group of order $p^n$. Suppose that for every $i=1,2,\ldots, p^n$ there is a monic divisor $P_i$ of the
polynomial $t^{p^n}-1$ in $\Z[t]$ such that $P_i\equiv (t-1)^i$ modulo $p$. Then $G$ is liftable.
\end{proposition}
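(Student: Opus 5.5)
The plan is to reduce to the indecomposable modules and write down explicit lifts using the polynomials $P_i$. Let $R$ be a commutative local ring with $M^2=0$ and residue field $k$ of characteristic $\ell$.

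If $\ell\neq p$, the order of $G$ is prime to $\ell$. Since $A_n(R)$ has exponent $\ell$, we get $H^2(G,A_n(R))=0$, so the extension (\ref{modp2}) pulled back along any homomorphism $G\to\GL_n(k)$ splits. Hence every homomorphism lifts; this is the same cohomological argument as in Proposition \ref{sylow}, with the trivial subgroup in place of $G_p$. So assume $\ell=p$.

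Let $\sigma$ be a generator of $G$. Then $k[G]\simeq k[t]/(t^{p^n}-1)=k[t]/((t-1)^{p^n})$. A finite dimensional $k[G]$-module $X$ is a finitely generated torsion module over the principal ideal domain $k[t]$, killed by $(t-1)^{p^n}$. By the structure theorem, $X$ decomposes as a direct sum of modules $X_i:=k[t]/((t-1)^i)$ with $1\le i\le p^n$. A direct sum of $R[G]$-lifts is an $R[G]$-lift of the direct sum: it is free over $R$, and reduction commutes with direct sums. This is the easy direction of Lemma \ref{sum}, and it works for any $R$. So it suffices to lift each $X_i$.

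For the lift of $X_i$, let $\bar P_i$ be the image of $P_i$ under the unique ring map $\Z[t]\to R[t]$, and set $N_i:=R[t]/(\bar P_i)$. Because $P_i$ is monic of degree $i$ (its reduction mod $p$ is $(t-1)^i$), division with remainder shows $N_i$ is free over $R$ with basis $1,t,\dots,t^{i-1}$. Because $P_i$ divides $t^{p^n}-1$ in $\Z[t]$, the image of $t^{p^n}-1$ in $R[t]$ lies in $(\bar P_i)$. Hence $t^{p^n}=1$ on $N_i$, and letting $\sigma$ act as multiplication by $t$ makes $N_i$ an $R[G]$-module. Finally, $\Z\to R\to k$ factors through $\F_p$, so
\[
N_i\tens_R k\simeq k[t]/\bigl((t-1)^i\bigr)=X_i
\]
as $k[G]$-modules, using $P_i\equiv (t-1)^i$ modulo $p$.

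No step here is a serious obstacle. The hypothesis on the $P_i$ is exactly what makes the construction of $N_i$ work: monicity gives $R$-freeness, and divisibility of $t^{p^n}-1$ gives a $G$-action. The only points needing care are the reduction to indecomposables over an arbitrary residue field $k$, which uses the structure theorem over $k[t]$, and the separate handling of $\ell\neq p$.
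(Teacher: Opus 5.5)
Your proof is correct and follows the paper's argument: you decompose a $k[G]$-module into the modules $k[t]/((t-1)^i)$ and lift each one to $R[t]/(\bar P_i)$, which is the paper's $R[G]/(P_i(\sigma))$. Your separate treatment of residue characteristic $\ell\neq p$, via the vanishing of $H^2(G,A_n(R))$, is a small but worthwhile addition: the definition of liftable covers all residue characteristics, whereas the paper's proof tacitly considers only rings $R$ whose residue field has characteristic $p$.
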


\begin{proof}
Let $R$ be a commutative local ring with maximal ideal $M$ satisfying $M^2=0$ and residue field $k$ of positive characteristic $p$.
 Choose a generator $\sigma$ of $G$.
Every finite dimensional $k[G]$-module is a direct sum of modules of the form $M_i=k[G]/(\sigma-1)^i$ for
$i=1,2,\ldots, p^n$, so it suffices to show that every $M_i$ lifts to an $R[G]$-module. Indeed,  $M_i$ lifts to the $R[G]$-module
$R[G]/(P_i(\sigma))$.
\end{proof}

\begin{corollary}\label{cycle2}\cite[Propositions 2.6 and 2.7]{KL}
Every cyclic group of order $3$ and $2^n$ is liftable.
\end{corollary}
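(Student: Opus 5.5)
We will deduce the corollary from Proposition \ref{cyclic}: for $G=C_3$ ($p=3$, $n=1$) and for $G=C_{2^n}$ ($p=2$), we exhibit, for every $i=1,\dots,p^n$, a monic divisor $P_i$ of $t^{p^n}-1$ in $\Z[t]$ with $P_i\equiv (t-1)^i \pmod p$. The natural source of divisors is the cyclotomic factorization
\[
t^{p^n}-1=\prod_{j=0}^{n}\Phi_{p^j}(t),
\]
and modulo $p$ we have $\Phi_{p^j}(t)\equiv (t-1)^{p^j-p^{j-1}}$ for $j\geq 1$, while $\Phi_1=t-1$. So any product of a subset of these cyclotomic factors, possibly multiplied by a further factor, reduces to a power of $t-1$. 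The task is to realize every exponent $i$.

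\emph{Case $C_3$.} We need exponents $1,2,3$. Take $P_1=t-1$, $P_3=t^3-1$, and $P_2=\Phi_3(t)=t^2+t+1$. Then $t^2+t+1\equiv t^2-2t+1=(t-1)^2 \pmod 3$, since $1\equiv -2 \pmod 3$. This case is immediate.

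\emph{Case $C_{2^n}$.} The degrees of $\Phi_1,\Phi_2,\Phi_4,\dots,\Phi_{2^n}$ are $1,1,2,4,\dots,2^{n-1}$. Every integer $i$ with $1\leq i\leq 2^n$ is a sum of a subset of these, by binary expansion; the extra $1$ from $\Phi_1$ covers $i=2^n$. Concretely, write $i$ in binary using the degrees $\deg\Phi_{2^j}=2^{j-1}$ for $j\geq 1$, and add $\Phi_1$ if needed. Let $P_i$ be the corresponding product of distinct cyclotomic factors. It is monic, divides $t^{2^n}-1$, and reduces to $(t-1)^i$ mod $2$, because $\Phi_{2^j}(t)=t^{2^{j-1}}+1\equiv (t-1)^{2^{j-1}}$ mod $2$ by the Frobenius identity.

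The only step requiring care is checking the congruences $\Phi_{p^j}\equiv (t-1)^{\varphi(p^j)}$, which follows from $t^{p^j}-1\equiv (t-1)^{p^j}$ mod $p$ together with induction on $j$. After that, Proposition \ref{cyclic} gives liftability. There is no real obstacle. The same argument shows why it fails for other $p$: the available degrees $1, p-1, p(p-1),\dots$ cannot produce all exponents unless $p\leq 3$, and for $p=3$ only when $n=1$. This matches Corollary \ref{cycle}.
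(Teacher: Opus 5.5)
Your proof is correct and is essentially the paper's: both apply Proposition \ref{cyclic} with $P_2=t^2+t+1$ for $p=3$, and for $p=2$ both take products of the factors $\Phi_{2^{j+1}}(t)=t^{2^j}+1$, $0\leq j\leq n-1$, chosen by the binary expansion of $i$. Your explicit use of $\Phi_1=t-1$ for $i=2^n$, giving $P_{2^n}=t^{2^n}-1$, is slightly more careful than the paper's recipe: for $i=2^n$ that recipe would produce $t^{2^n}+1$, which is not a divisor of $t^{2^n}-1$.
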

\begin{proof}
If $p=3$, the polynomial $(t-1)^2$ lifts to the divisor $t^2+t+1$ of $t^3-1$ in $\Z[t]$.
In the case $p=2$, let $i=1,2,\ldots, 2^n$. Write $i$ is base $2$:
$i=2^{s_0}+2^{s_1}+\cdots +2^{s_m}$, where $0\leq s_0<s_1<\cdots <s_m$. The polynomial $t^{2^n}-1$ is the product
$(t-1)Q_0 Q_1\cdots Q_{n-1}$ in $\Z[t]$, where $Q_j=t^{2^j}+1$. Therefore, we can take
$P_i=Q_{s_0}Q_{s_1}\cdots Q_{s_m}$ for the lift of $(t-1)^i$ modulo $2$ in $\Z[t]$.
\end{proof}

\section{$C_2 \times C_2$ and $Q_8$ are not $2$-liftable}

Set $S=M_2(\F_2)$,  $T=M_2(\Z/4\Z)$,  and
\[
x=\begin{bmatrix}
  0 &   1\\
 1  & 1\\
 \end{bmatrix}\in S, \quad
y=\begin{bmatrix}
  0 &   1\\
 1  & 1\\
 \end{bmatrix}\in T.
 \]

\begin{proposition}\label{v4}
The Klein group $C_2 \times C_2$ is not $2$-liftable.
\end{proposition}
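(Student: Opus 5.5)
The plan is to exhibit one explicit mod $2$ representation of $V=C_2\times C_2$ and show by direct matrix computation that it has no lift to $\GL_n(\Z/4\Z)$. The choice of $x$ and $y$ suggests the construction. The matrix $x\in S$ satisfies $x^2=x+1$, so $\{0,1,x,x^2\}\subset S$ is a copy of $\F_4$. Its additive group is a Klein group, and $a\mapsto \begin{pmatrix} 1 & a\\ 0 & 1\end{pmatrix}$ embeds it in $\GL_4(\F_2)$. Concretely, I would send the generators $\sigma,\tau$ of $V$ to
\[
u=\begin{pmatrix} 1 & 1\\ 0 & 1\end{pmatrix},\qquad v=\begin{pmatrix} 1 & x\\ 0 & 1\end{pmatrix}\qquad (2\times 2 \text{ blocks over } S).
\]
A lift would consist of matrices $U,V\in\GL_4(\Z/4\Z)$, written in $2\times2$ blocks over $T$, with
\[
U=\begin{pmatrix} 1+2A & 1+2B\\ 2C & 1+2D\end{pmatrix},\quad V=\begin{pmatrix} 1+2A' & y+2B'\\ 2C' & 1+2D'\end{pmatrix},
\]
satisfying $U^2=V^2=1$ and $UV=VU$. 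Here $y\in T$ is the fixed lift of $x$ defined above, and the blocks $A,\dots,D'$ are arbitrary; only their residues in $S$ matter.

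Next I would expand the three relations modulo $4$ and read them off as linear equations over $\F_2$ for $\bar A,\dots,\bar D'\in S$. The diagonal blocks of $U^2=1$ and $V^2=1$ force $\bar C=\bar C'=0$. The upper-right block of $U^2=1$ gives $\bar A+\bar D=1$, and that of $V^2=1$ gives $x+\bar A'x+x\bar D'=0$. The upper-right block of $UV=VU$ gives $\bar Ax+\bar D'=\bar A'+x\bar D$. At this point the problem is a finite linear system over $\F_2$, and I would look for an invariant, most likely a trace, that yields $0=1$. For example, multiplying by $x$ and taking traces, using $\operatorname{tr}(x)=\operatorname{tr}(x^2)=1$ and cyclicity.

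I expect this last step to be the main obstacle. A quick trace check of the system above gives the identity $1=1$ rather than a contradiction, so the naive $4$-dimensional representation may in fact lift. In that case there are two alternatives. The first is to enlarge the representation: use a $3\times 3$ block-unipotent module over $S$ such as $\begin{pmatrix}1&a&b\\0&1&c\\0&0&1\end{pmatrix}$, where $a$ and $c$ run through $\F_4$ and $b$ is chosen so that the map is a homomorphism. Alternatively, use an indecomposable $V$-module of odd dimension. With more off-diagonal blocks, the quadratic cross terms such as $2y^2$, which do not vanish since $y^2\not\equiv y+1 \bmod 2$ fails to hold exactly mod $4$, should give an obstruction. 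The second alternative is to apply Proposition \ref{more} with suitable $f,h\in\F_2[V]$, for instance $f=(1+\sigma)(1+\tau)$ and $h$ an element of the augmentation ideal with $fh=0$, and compute $\theta(f,h)$ in $\F_2[V]/(f\F_2[V]+\F_2[V]h)$ using the identities $(1+\hat\sigma)^2=2(1+\hat\sigma)$ in $(\Z/4\Z)[V]$. I would attempt the matrix computation first, since the definitions of $x$ and $y$ indicate that the authors intend it. I would fall back on the $\theta$-computation if the linear system turns out to be consistent.
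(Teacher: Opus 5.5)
Your representation is the paper's. Your three relations $\bar A+\bar D=1$, $x+\bar A'x+x\bar D'=0$ and $\bar Ax+\bar D'=\bar A'+x\bar D$ are the paper's relations after renaming: $\bar A,\bar D,\bar A',\bar D'$ play the roles of $a_{11},a_{22},b_{11},b_{22}$. The gap is that you stop at this point and suspect the system may be solvable. It is not, and the contradiction is one step away.

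The paper works modulo $W=[x,S]=\{xs+sx : s\in S\}$. There $xs\equiv sx$ and $xW=W$, so
\[
1\equiv x^{-1}(\bar A'x+x\bar D')\equiv \bar A'+\bar D'\equiv \bar Ax+x\bar D\equiv x(\bar A+\bar D)=x \pmod W .
\]
Hence $x+1\in W$. This is impossible, because every element of $W$ has trace $0$ while $\operatorname{tr}(x+1)=1$.

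In your trace language, the argument runs as follows.
\begin{itemize}
\item Multiply $x+\bar A'x+x\bar D'=0$ on the left by $x^{-1}$ and take traces. This gives $\operatorname{tr}(\bar A'+\bar D')=\operatorname{tr}(1)=0$.
\item Take the plain trace of the commutation relation. This gives $\operatorname{tr}(x(\bar A+\bar D))=\operatorname{tr}(\bar A'+\bar D')=0$.
\item But $\bar A+\bar D=1$, so the left side is $\operatorname{tr}(x)=1$, a contradiction.
\end{itemize}
Your check multiplied by $x$ first, which uses the functional $s\mapsto\operatorname{tr}(xs)$. On the $2$-dimensional space $S/W$, this is the unique nonzero functional that vanishes on $x+1$, since $\operatorname{tr}(x^2+x)=\operatorname{tr}(1)=0$. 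That is why you got the uninformative $1=1$.

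Your fallbacks would not have rescued the argument.
\begin{itemize}
\item The $\theta$-route cannot work. For $f=(1+\sigma)(1+\tau)$, the lift $\hat f$ is the norm element $N$. Then $N\hat h=\varepsilon(\hat h)N$, where $\varepsilon$ is the augmentation and $\varepsilon(\hat h)\in 2\Z/4\Z$. So the residue of $u$ is a multiple of $f$ and $\theta(f,h)=0$.
\item More generally, a short computation in $\F_2[s,t]/(s^2,t^2)$ shows that $\theta$ vanishes identically on $\F_2[C_2\times C_2]$. This is expected: Proposition~\ref{more} only uses liftability of the cyclic modules $k[G]/k[G]h$, whereas the $4$-dimensional module here needs two generators.
\item The $3\times 3$ block alternative is not worked out. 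Its motivating remark about cross terms is also mistaken, since $y^2=y+1$ holds exactly in $T$.
\end{itemize}
As written, the proposal does not prove the statement. Completing your first plan with the correct functional (equivalently, working modulo $[x,S]$) does.
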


\begin{proof}
 Consider the following $4$-dimensional representation of $C_2 \times C_2=\langle \sigma,\tau\rangle$:
 \[
C_2 \times C_2\to \GL_2(S)=\GL_4(\F_2),\quad \sigma\mapsto
 \begin{bmatrix}
  1_S &   1_S\\
 0_S  & 1_S\\
 \end{bmatrix}, \quad
 \tau\mapsto \begin{bmatrix}
  1_S &   x\\
 0_S  & 1_S\\
 \end{bmatrix}.
 \]
 We claim that this representation is not liftable modulo $4$. Suppose it is liftable:
 \[
\sigma\mapsto A\cdot
 \begin{bmatrix}
  1_{T} &   1_{T}\\
 0_{T}  & 1_{T}\\
 \end{bmatrix}, \quad
 \tau\mapsto B \cdot \begin{bmatrix}
  1_{T} &   {y}\\
 0_{T}  & 1_{T}\\
 \end{bmatrix},
 \]
 where $A$ and $B$ are in the kernel of $\GL_2(T)\to \GL_2(S)$.
 We identify this kernel  with $M_2(S)$
 (written additively). The group $\GL_2(S)$ acts on $M_2(S)$ by conjugation.
 Let $a$ and $b$ be two matrices in $M_2(S)$ corresponding to $A$ and $B$, respectively, under this identification.
 The relations $\sigma^2=1=\tau^2$ and $\sigma\tau=\tau\sigma$ yield
 \[
 a+\sigma(a)=\begin{bmatrix}
  0 &   1\\
 0  & 0\\
 \end{bmatrix},\quad
 b+\tau(b)=\begin{bmatrix}
  0 &   x\\
 0  & 0\\
 \end{bmatrix},\quad a+ \tau(a)=b+ \sigma(b).
 \]
 in $M_2(S)$. Writing $a=(a_{ij})$ and $b=(b_{ij})$ with $a_{ij}$ and  $b_{ij}$ in $S$,
 we have the following relations in $S$:
 \[
 a_{21}=b_{21}=0,
 \]
 \[
 a_{11}+a_{22}=1,
 \]
 \[
 b_{11}x + xb_{22}=x,
 \]
\[
 a_{11}x + xa_{22}=b_{11} + b_{22}.
 \]
Consider the subspace
\[
V:=[x, S]=\{xs+ sx,\ s\in S\}\subset S.
\]
Note that $xV=V=x\inv V$ since $x(xs+ sx)=x(xs)+(xs)x$. We then have the following equalities in $S/V$:
\[
1=x\inv (b_{11}x + xb_{22})=b_{11} + b_{22}=a_{11}x + xa_{22}=
x(a_{11}+a_{22})=x,
\]
hence $x+1\in V$, a contradiction since $\operatorname{Trace}(x+1)\neq 0$.
\end{proof}

\begin{proposition}\label{q8}
The quaternion group $Q_8$ is not $2$-liftable.
\end{proposition}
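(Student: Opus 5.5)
The plan is to imitate the proof of Proposition~\ref{v4}. Write $Q_8=\langle \sigma,\tau\rangle$ with relations $\sigma^2=\tau^2$, $\sigma^4=1$, $\tau\sigma\tau\inv=\sigma\inv$, and put $z=\sigma^2$, which is central. Since $Q_8/\langle z\rangle\simeq C_2\times C_2$, the $4$-dimensional representation used in Proposition~\ref{v4} inflates to a representation of $Q_8$:
\[
\sigma\mapsto\begin{bmatrix} 1_S & 1_S\\ 0_S & 1_S\end{bmatrix},\qquad \tau\mapsto \begin{bmatrix} 1_S & x\\ 0_S & 1_S\end{bmatrix}.
\]
I will show that this representation does not lift modulo $4$. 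Suppose a lift has the form $\sigma\mapsto A\,U$ and $\tau\mapsto B\,W$, where $U,W$ are the block matrices over $T$ with upper-right blocks $1_T$ and $y$, and $A,B$ lie in the kernel, which we identify with $M_2(S)$. Write $a,b$ for the corresponding elements of $M_2(S)$. The new feature compared with the Klein group is that $z$ maps to $1$ modulo $2$, so its lift is an element $c$ of the kernel. This element $c$ is not forced to be trivial. It is only forced to be fixed under conjugation by the images of $\sigma$ and $\tau$, since $z$ is central.

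First I compute $U^2$ and $W^2$ over $T$. Their upper-right blocks are $2\cdot 1_T$ and $2y$, so they correspond to $u=\begin{bmatrix}0&1\\0&0\end{bmatrix}$ and $w=\begin{bmatrix}0&x\\0&0\end{bmatrix}$ in $M_2(S)$. Note also that $U$ and $W$ commute. The relations $\sigma^2=z=\tau^2$ and $\tau\sigma=\sigma\tau z$ then translate additively into
\[
a+\sigma(a)+u=c,\qquad b+\tau(b)+w=c,\qquad a+\tau(a)=b+\sigma(b)+c .
\]
Next I determine the invariant elements. Invariance of $c$ under conjugation by the two unipotent block matrices forces $c_{21}=0$ and $c_{11}=c_{22}$, and also $c_{11}x=xc_{11}$; so $c_{11}$ lies in the commutant $\F_2[x]\cong\F_4$ of $x$. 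Writing out the blocks of the three equations as in Proposition~\ref{v4} gives $a_{21}=b_{21}=0$ again. The remaining equations become the old ones perturbed by $c$:
\[
a_{11}+a_{22}=1+c_{12},\qquad b_{11}x+xb_{22}=x+c_{12},
\]
together with a modified third relation that also involves $c_{11}$ and $c_{12}$. (The precise signs and positions of the $c$-terms must be checked by the routine block computation.)

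The main obstacle is to eliminate $c$. My first attempt is to run the same computation modulo $V=[x,S]$ as before. We have $xV=V$, and $c_{11}\in\F_2[x]$ is a polynomial in $x$. I would combine the equations, multiplying by $x$ and $x\inv$ as in the Klein case, so that the contributions of $c_{12}$ appear once with factor $1$ and once with factor $x$ (or $x\inv$), while $c_{11}$ drops out. The target is to reach $(1+x)(1+c_{12})\in V$, or a similar element with nonzero trace, to lie in $V$. This would give a contradiction, because $V$ consists of trace-zero matrices, and I expect the trace computation to go through since $\operatorname{Trace}(x)=\operatorname{Trace}(1+x)=1$.

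If the $c$-terms cannot be cancelled this way, I fall back to a fixed choice of $c$. Note that $c\mapsto c+(\text{coboundary})$ corresponds to changing $a,b$, so only finitely many classes of $c$ in an $8$-dimensional $\F_2$-space need to be ruled out, each by the Klein-group argument. If even that fails, I switch to Proposition~\ref{more}: find $f,h\in k[Q_8]$ with $fh=0$ (for instance built from $1+\sigma$, $1+\tau$ and $1+z$) and compute directly that $\theta(f,h)\neq 0$.
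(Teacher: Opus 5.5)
The proposal has a fatal gap: the representation you chose does lift modulo $4$. Explicitly, consider the matrices
\[
\sigma\mapsto\begin{bmatrix} 1_T & 1_T\\ 2\cdot 1_T & -1_T\end{bmatrix},\qquad
\tau\mapsto\begin{bmatrix} 1_T & y\\ 2(1_T+y) & -1_T\end{bmatrix}
\]
in $\GL_2(T)=\GL_4(\Z/4\Z)$. They reduce modulo $2$ to your two matrices. Both square to $-1$; for $\tau$ this uses $2(y+y^2)=2\cdot 1_T$, which holds because $y+y^2\equiv x+x^2=1$ modulo $2$. They also anticommute. Hence $\sigma^2=\tau^2$ and $\tau\sigma\tau\inv=\sigma\inv$, so they define a homomorphism $Q_8\to\GL_4(\Z/4\Z)$. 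In your notation this is the solution $a=\left[\begin{smallmatrix}0&0\\1&0\end{smallmatrix}\right]$, $b=\left[\begin{smallmatrix}0&0\\1+x&0\end{smallmatrix}\right]$, $c=1$, that is, $z\mapsto -1$.

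Your bookkeeping hides this. The $(1,1)$-blocks of your first two equations give $a_{21}=c_{11}$ and $xb_{21}=c_{11}$, not $a_{21}=b_{21}=0$. The $(1,2)$-blocks then read $a_{11}+a_{22}=1+c_{11}+c_{12}$ and $b_{11}x+xb_{22}=x+c_{11}x+c_{12}$. The choice $c_{11}=1$, $c_{12}=0$ cancels exactly the inhomogeneous terms $1$ and $x$ that gave the contradiction for the Klein group.

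Conceptually, let $\omega$ be a primitive cube root of unity. Then $-1=(1+\omega)^2+(1+\omega^2)^2$, so $Q_8$ has a $2$-dimensional representation over $\Z[\omega]$. Its reduction is, up to conjugation over $\F_4$, $\sigma\mapsto\left[\begin{smallmatrix}1&1\\0&1\end{smallmatrix}\right]$, $\tau\mapsto\left[\begin{smallmatrix}1&\omega\\0&1\end{smallmatrix}\right]$. Restricting scalars from $\F_4=\F_2[x]$ to $\F_2$ gives your representation, which therefore even lifts to characteristic $0$. Consequently neither the computation modulo $V$ nor the case analysis over classes of $c$ can produce a contradiction. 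The last fallback via Proposition~\ref{more} is not an argument as stated, since no $f,h$ are given.

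The missing idea is this: a representation inflated from $C_2\times C_2=Q_8/\langle z\rangle$ lets the lift of the central element $z$ absorb the Klein obstruction. You need a representation on which $z$ acts nontrivially modulo $2$. The paper uses the faithful $6$-dimensional representation $\sigma\mapsto j$, $\tau\mapsto k$ in $\GL_3(S)$; note that $j^2\neq 1$. It writes a putative lift as $J+2A$, $K+2B$ and uses only the relation $\sigma^2=\tau^2$. This becomes $ja+aj=\left[\begin{smallmatrix}x^2&0&0\\1&0&1\\0&0&x\end{smallmatrix}\right]+kb+bk$ in $M_3(S)$. Now take traces in $S$. The left side has trace $0$. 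The trace of $kb+bk$ is a sum of commutators with $x$ and $x^2=x+1$, hence lies in $V=[x,S]$. So $x^2+x=1\in V$, whence $x\in xV=V$, contradicting $\operatorname{Trace}(x)\neq 0$. The trace-modulo-$V$ device from the Klein case is the right tool, but it must be applied to such a representation.
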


\begin{proof}

Note the relation $x^2+x+1=0$ in $S$.

Consider the quaternion group $Q_8=\langle \sigma, \tau\ |\ \sigma^2=\tau^2, \sigma^4=1, \tau\sigma=\sigma^3 \tau \rangle$ and its $6$-dimensional
representation over $\F_2$ given by
\[
 Q_8\to \GL_3(S)=\GL_6(\F_2),\quad \sigma\mapsto
j=\begin{bmatrix}
  0 &  0 & 1\\
 1 &  0 & 1\\
  0 &  1 & 1\\
\end{bmatrix}, \quad
 \tau\mapsto k=\begin{bmatrix}
0 &  x  & 1\\
 x &   {x}^2 &  x\\
{x}^2&   0 &  x \\
\end{bmatrix}.
 \]
 We claim that this representation is not liftable modulo $4$. Suppose it is liftable:
\[
\sigma\mapsto J+2  A,
\quad \tau\mapsto K+2 B,
\]
where $A$ and $B$ are two $3\times 3$ matrices over $T$ and
\[
J=\begin{bmatrix}
  0 &  0 & 1\\
 1 &  0 & 1\\
  0 &  1 & 1\\
\end{bmatrix},
\quad K=\begin{bmatrix}
0 &  y  & 1\\
  y &   {y}^2 &  y\\
{y}^2&   0 & y \\
\end{bmatrix}
\]
in $M_3(T)$.
The equality $\sigma^2=\tau^2$ yields $(J+2  A)^2= (K+2 B)^2$ in $M_3(T)$. Since
 \[
{J}^2=\begin{bmatrix}
  0 &  1 & 1\\
 0 &  1 & 2\\
  1 &  1 & 2\\
\end{bmatrix},
\quad {K}^2=\begin{bmatrix}
2{y}^2 &  1  & 1\\
  2 &   1 &  0\\
1 &  1 & 2{y}^2 \\
\end{bmatrix},
\]
we get an equality
\[
2(J A+A J)=\begin{bmatrix}
2{y}^2 &  0  & 0\\
2 &   0 &  2\\
0 &   0 & 2{y} \\
\end{bmatrix}+2(K B+ B K)
\]
in $M_3(T)$, hence
\begin{equation}\label{comm}
ja+ aj=\begin{bmatrix}
{x}^2 &  0  & 0\\
1 &   0 &  1\\
0 &   0 & x \\
\end{bmatrix}+( kb+ bk)
\end{equation}
in $M_3(S)$, where $a$ and $b=(b_{ij})$ in $M_3(S)$ are the residues modulo $2$ of $A$ and $B$, respectively.
A computation shows that the trace
in $S$ of the $3\times 3$ matrix $ja+ aj$ is zero and the trace of $kb+ bk$ is equal to
\[
[x, b_{12}]+[x, b_{21}]+[x, b_{32}]+[x, b_{33}]+[x^2, b_{13}]+[x^2, b_{22}].
\]
This is contained in $V=[x,S]$ since $[x^2,c]=[x+1,c]=[x,c]\in V$. It follows from (\ref{comm})
that $1=x^2+x\in V$, hence $x\in xV=V$, a contradiction since $\operatorname{Trace}(x)\neq 0$.
\end{proof}

\section{$C_3 \times C_3$ is not $3$-liftable}

\begin{proposition}\label{33}\cite[Claim 5.4]{MS26}
The group $C_3 \times C_3$ is not $3$-liftable.
\end{proposition}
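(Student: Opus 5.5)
The plan is to adapt the proof of Proposition \ref{v4}, replacing $2$ by $3$. Take $S=M_2(\F_3)$ and $T=M_2(\Z/9\Z)$. Fix an invertible $x\in S$, with a lift $y\in T$, to be chosen later. Consider the $4$-dimensional representation of $C_3\times C_3=\langle\sigma,\tau\rangle$ over $\F_3$ given by
\[
\sigma\mapsto u=\begin{bmatrix} 1_S & 1_S\\ 0_S & 1_S\end{bmatrix},\qquad \tau\mapsto w=\begin{bmatrix} 1_S & x\\ 0_S & 1_S\end{bmatrix}.
\]
Write $N=\begin{bmatrix}0&1\\0&0\end{bmatrix}$ and $X=\begin{bmatrix}0&x\\0&0\end{bmatrix}$, so that $u=1+N$ and $w=1+X$. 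Since $N^2=X^2=0$ and $NX=XN=0$, both matrices have order $3$ and they commute. Denote by $U,W$ the analogous block matrices over $T$ (with $y$ in place of $x$). These still commute, and $U^3=1+3N$, $W^3=1+3X$, where $N,X$ now denote the same block matrices over $T$ ($X$ with $y$ in place of $x$). Suppose there is a lift $\sigma\mapsto(1+3A)U$, $\tau\mapsto(1+3B)W$, and let $a,b\in M_2(S)$ be the residues of $A,B$. As in the Klein case, $\GL_2(S)$ acts on $M_2(S)$ by conjugation.

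The relations $\sigma^3=1$ and $\tau^3=1$ then give norm equations:
\[
a+\sigma(a)+\sigma^2(a)=-N,\qquad b+\tau(b)+\tau^2(b)=-X.
\]
Expand $\sigma^i(a)=(1+iN)a(1-iN)$ and use $N^2=0$. Since $\sum_{i=0}^{2} i\equiv 0$ and $\sum_{i=0}^{2} i^2\equiv 2\pmod 3$, the left-hand side reduces to $NaN$. Reading off the upper-right block gives $a_{21}=-1$. The same computation for $\tau$ gives $x\,b_{21}\,x=-x$, that is, $b_{21}=-x^{-1}$. The commutation relation $\sigma\tau=\tau\sigma$ gives $a-\tau(a)=b-\sigma(b)$, that is,
\[
-(Xa-aX)+XaX=-(Nb-bN)+NbN.
\]
Taking the four blocks of this identity yields linear relations in $S$ among the $a_{ij}$ and $b_{ij}$. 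The two diagonal blocks should give $x a_{21}=b_{21}$ and $a_{21}x=b_{21}$, up to sign. The upper-right block should give a relation of the form
\[
x a_{22}-a_{11}x+x a_{21}x = b_{22}-b_{11}+b_{21}.
\]

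The main obstacle is extracting a contradiction from this system. The diagonal-block relations combined with $a_{21}=-1$ and $b_{21}=-x^{-1}$ give $-x=-x^{-1}$, so $x^2=1$. So I would first try simply choosing $x$ with $x^2\neq 1$, for instance an element of order $3$ or $4$ in $\GL_2(\F_3)$. If the signs work out differently than expected and this equation turns out consistent, I would fall back on the Klein-group trick. There I would work modulo $V=[x,S]$, which satisfies $xV=V$, and apply the trace functional, which vanishes on $V$. In $S/V$ the upper-right relation collapses to a scalar identity in $x$ such as $x^2-1\in V$ or $x\in V$. I would then choose $x$ so that this element has nonzero trace, giving the desired contradiction. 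The bookkeeping of signs in characteristic $3$ is the delicate point. If this $4$-dimensional module fails, I would enlarge to a $3\times 3$ block (Jordan-type) representation, as in the proof of Proposition \ref{q8}.
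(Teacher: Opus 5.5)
Your argument is correct, and your first line of attack already closes it. The $(1,1)$- and $(2,2)$-blocks of $a-\tau(a)=b-\sigma(b)$ are exactly $-xa_{21}=-b_{21}$ and $a_{21}x=b_{21}$, with no sign ambiguity. Combined with $a_{21}=-1$ and $b_{21}=-x^{-1}$, this forces $x^2=1$. So any invertible $x$ with $x^2\neq 1$ gives the contradiction: an element of order $3$ or $4$ in $\GL_2(\F_3)$ works, as does the $x$ of Proposition~\ref{v4} read over $\F_3$, where $x^2=\begin{bmatrix}1&1\\1&2\end{bmatrix}$. The upper-right block (whose signs you have slightly off, though it doesn't matter), the trace modulo $[x,S]$, and the larger representation are all unnecessary.

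This is a different route from the paper's. The paper uses the $3$-dimensional representation $\sigma\mapsto 1+e_{12}$, $\tau\mapsto 1+e_{13}$ over $\F_3$. The same norm computation gives $N_{12}(a)=a_{21}e_{12}$, so $a_{21}=-1$, while the $(2,3)$-entry of the commutation relation gives $a_{21}=0$. The cochain $b$ never enters, and everything is a scalar computation.

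Your version costs an extra dimension, a noncommutative coefficient ring, and both norm equations. In return, it follows the template of Proposition~\ref{v4}, and is even simpler than it since no trace argument is needed. It also isolates the obstruction cleanly: an extension $\sigma\mapsto\begin{bmatrix}1&1\\0&1\end{bmatrix}$, $\tau\mapsto\begin{bmatrix}1&x\\0&1\end{bmatrix}$ of trivial modules can lift mod $9$ only if $x^2=1$. Taking $x$ to be multiplication by some $\lambda\in\F_9\setminus\F_3$ on $\F_9\cong\F_3^2$, your module is the restriction of scalars of a $2$-dimensional $\F_9$-representation. The same computation with $S=\F_9$ shows that this representation does not lift to $W_2(\F_9)$.
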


\begin{proof}
For every $1 \leq i < j \leq 3$, let $e_{ij}$ be the matrix with
$1$ on the $(i j)$-entry, and $0$ everywhere else and set $\sigma_{ij}:=1+e_{ij}$.
Let $C_3 \times C_3\to \GL_3(\F_3)$ be a $3$-dimensional representation taking generators to $\sigma_{12}$ and $\sigma_{13}$,
respectively. If this homomorphism lifts to $\GL_3(\Z/9\Z)$,  there should exist two elements $\rho,\tau\in {\GL}_3(\Z/9\Z)$ such that $\rho^3=\tau^3=[\rho,\tau]=1$, $\rho$ reduces to $\sigma_{12}$ modulo $3$, and $\tau$ reduces to $\sigma_{13}$ modulo $3$. We now prove that such $\rho$ and $\tau$ do not exist. We have $\rho=A{\sigma}_{12}$ and
$\tau=B{\sigma}_{13}$ in $\GL_3(\Z/9\Z)$,
for some $A, B\in A_3(\Z/9\Z)$. Identifying  $A_3(\Z/9\Z)$ with the additively written group $M_3(\F_3)$,
we rewrite the relations $\rho^3=\tau^3=[\rho,\tau]=1$ in the form
\[
N_{12}(a) = -e_{12},\quad N_{13}(b) = -e_{13}\quad\text{and}\quad  (\sigma_{13}-1)a = (\sigma_{12} - 1)b
\]
in $M_3(\F_3)$, where $a,b\in M_3(\F_3)$ and
$N_{ij}:= 1+\sigma_{ij}+\sigma_{ij}^2$.
Letting $a=(a_{ij})$, a matrix computation shows
\[
N_{12}(a)=
\begin{pmatrix}
    0 & a_{21} & 0 \\
    0 & 0 & 0 \\
    0 & 0 & 0
\end{pmatrix},\qquad
-(\sigma_{13}-1)a+(\sigma_{12}-1)b=\begin{pmatrix}
    * & * & * \\
    * & * & a_{21} \\
    * & * & *
\end{pmatrix}.
\]
Thus $-1=a_{21}=0$, a contradiction.
\end{proof}

\section{Proof of Theorem \ref{main}}

\begin{proof}
$(1)\Rightarrow (2)$ is trivial.

$(2)\Rightarrow (3)$: Let $G$ be a $p$-liftable finite group for every prime $p$.
By Proposition \ref{red}, a Sylow $p$-subgroup $G_p$ of $G$ is $p$-liftable for all $p$.
If $p>3$ then $G_p=1$ since otherwise $G_p$ would contain a cyclic group of order $p$ which is not
$p$-liftable by Corollary \ref{cycle}. This contradicts Proposition \ref{red}.

The subgroup $G_3$ has order at most $3$. Indeed, if $|G_3|>3$, then $G_3$ contains a subgroup of order $9$, hence
$G_3$ contains either $C_3\times C_3$ or $C_9$. The latter two groups are not $3$-liftable by
Proposition \ref{33} and Corollary \ref{cycle}. Again, we get a contradiction by Proposition \ref{red}.

We claim that every abelian subgroup $H\subset G_2$ (which is $2$-liftable by Proposition \ref{red}) is cyclic.
Indeed, if $H$ were not cyclic, it would contain a subgroup $C_2\times C_2$. This contradicts Propositions
\ref{v4} and \ref{red}. The claim is proved.

By \cite[Chapter IV, Corollary 6.6]{AM}, a $2$-group with all abelian subgroups cyclic is either cyclic or
generalized quaternion group $Q_{2^n}$, $n\geq 3$,
generated by $\sigma$ and $\tau$ subject to the relations $\sigma^{2^{n-2}}=\tau^2$, $\tau^4=1$ and
$\tau\sigma\tau\inv =\sigma\inv$. The subgroup of $Q_{2^n}$ generated by $\sigma^{2^{n-3}}$ and $\tau$ is isomorphic
to $Q_8$ which is not $2$-liftable by Proposition $\ref{q8}$. It follows from Proposition \ref{red} that $G_2$ is cyclic.
Thus, we proved that all Sylow subgroups of $G$ are cyclic and the order of $G$ divides $3\cdot 2^n$.
It follows from the proof of \cite[Proposition 2.4]{KL} that $G$ is one of the groups in the list $(3)$.

$(3)\Rightarrow (1)$: By Proposition \ref{sylow} it suffices to prove that all Sylow subgroups $G_p$ of $G$ are liftable.
The group $G_p$ is either trivial or cyclic of order $3$ or $2^n$. By Corollary \ref{cycle2}, $G_p$ is liftable.
\end{proof}

\def\cprime{$'$} \def\cprime{$'$}

%\bibliography{bibfile}
%\bibliographystyle{acm}

\end{document}